\newcommand{\C} {\mathbb C}
\newcommand{\h} {\hat}
\newcommand{\mc}{\mathcal}
\newtheorem{theorem}{Theorem}
\newtheorem{corollary}[theorem]{Corollary}
\newtheorem{proposition}[theorem]{Proposition}
\newtheorem*{definition}{Definition}
\newtheorem*{Example}{Example}
\title{On decomposable rational maps.}
\author{Carlos Cabrera  and Peter Makienko\\
Instituto de Matem\'aticas,\\ Unidad Cuernavaca. UNAM}
\begin{document}

\maketitle
\footnotetext{This work was partially supported by PAPIIT project IN 100409.}
\begin{abstract}
If $R$ is a rational map, the Main Result is a uniformization Theorem for
the space of decompositions of the iterates of $R$. Secondly, we show that
Fatou conjecture holds for decomposable rational maps. 

\end{abstract}

\section{Introduction}

This paper gives a dynamical approach to the algebraic problem of
decomposition of rational maps. That is to describe the set of decompositions
of a rational map $R$, along with the decompositions of all its iterates $R^n$.
We want to link geometric structures with the decomposition of rational maps.
To this end, we construct a space which describes the space of decomposition of
the cyclic semigroup generated by $R$.

We found that the fact that a map is decomposable impose dynamical
consequences. In particular, we show using elementary arguments that the
Fatou conjecture is true for decomposable rational maps. 

We would like to thank M. Zieve for useful comments and discussions and for
kindly providing his example of a prime rational map which is virtually
indecomposable.

\section{On stability of decomposable maps}
Given a rational map $R$, the Julia set $J(R)$ is the smallest completely
invariant closed set in the Riemann sphere
$\bar{\C}$, with at least $3$
points. The Fatou set $F(R)$ is the complement of the Julia set on $\bar{C}$.
By definition, the set $F(R)$ is open and completely invariant.

A map $R$ is decomposable if there are maps $R_1$ and $R_2$, of degree
at least $2$, such that $R=R_1\circ R_2$. In this section, we study stability
properties for decomposable rational maps. The simple fact that the
maps $$R=R_1\circ R_2\textnormal{ and }\tilde{R}=R_2\circ R_1$$ are
semiconjugated, provides
arguments to show that $J$-stability implies hyperbolicity for decomposable
maps.  The Fatou conjecture, as restated in \cite{MSS}, states that all
$J$-stable maps are hyperbolic.

First, we recall the definitions of $J$-stability, more details can be found in
\cite{MSS} and \cite{McMSull}. Let $(X,d_1)$ and $(Y,d_2)$ be metric spaces, a
map
$\phi:X\rightarrow Y$ is called $K$-quasiconformal, in Pesin's sense if, for
every $x_0\in X$
$$\limsup_{r\rightarrow 0}\left\{
\frac{\sup\{|\phi(x_0)-\phi(x_1)|:|x_0-x_1|<r\}}{\inf\{
|\phi(x_0)-\phi(x_1)|:|x_0-x_1|<r\}}\right\}\leq K.$$

Let us recall that two rational maps $R_1$ and $R_2$ are
 $J$\textit{-equivalent}, if there is a homeomorphism
$h:J(R_1)\rightarrow J(R_2)$, which is quasiconformal in Pesin's
sense and conjugates $R_1$ to $R_2$.

Given a family of maps $\{R_w\}$ depending holomorphically on a
parameter $w\in W$, a map $R_{w_0}$ in $\{R_w\}$ is called
$J$\textit{-stable} if, there is a neighborhood $V$ of $w_0$ such
that, $R_w$ is $J$-equivalent to $R_{w_0}$ for all $w\in V,$ and the
conjugating homeomorphisms depend holomorphically on $w$.

\begin{theorem}\label{thm.decom}
Let $R=R_1\circ R_2$ and $\tilde{R}=R_2\circ R_1$, such that $deg(R_i)>1$ for
$i=1,2$. If both maps, $R$ and $\tilde{R}$, are $J$-stable. Then $R$ and
$\tilde{R}$ are hyperbolic.
\end{theorem}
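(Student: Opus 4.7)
The plan is to exploit the two semiconjugacies $R_2\circ R=\tilde R\circ R_2$ and $R_1\circ \tilde R=R\circ R_1$ together with concrete perturbations that respect the decomposition, so that the two $J$-stability hypotheses combine to forbid every non-hyperbolic feature. First I would record the direct consequences: $R_2$ sends periodic points of $R$ to periodic points of $\tilde R$ of the same period, and the chain rule gives $(R^n)'(p)=(\tilde R^n)'(R_2(p))$, so corresponding cycles have equal multipliers; moreover $R_2(J(R))=J(\tilde R)$, $R_1(J(\tilde R))=J(R)$, and the indifferent cycles of $R$ and $\tilde R$ are in multiplier-preserving bijection. Combined with the Ma\~n\'e-Sad-Sullivan characterization of $J$-stability and Sullivan's no-wandering-domains theorem, the problem reduces to showing that neither map has indifferent periodic cycles nor critical points in the Julia set.

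The core step is to rule out indifferent fixed points; higher periods reduce to this by passing to the $J$-stable iterate $R^n$, which still decomposes through $R_1$ and $R_2$. Let $p$ be a fixed point of $R$ with multiplier $\mu\in S^1$, and set $q=R_2(p)$; then $\mu=R_1'(q)R_2'(p)$ and $|\mu|=1$ forces $R_2'(p)\neq 0$. Choose a holomorphic perturbation $R_{1,t}$ in the space of rational maps of degree $\deg R_1$ with $R_{1,t}(q)=p$ and $R_{1,t}'(q)=R_1'(q)+t$, and set $R_t:=R_{1,t}\circ R_2$. Then $p$ is persistently fixed by $R_t$ and
$$R_t'(p)=R_{1,t}'(q)\,R_2'(p)=\mu+tR_2'(p),$$
a nonconstant holomorphic function of $t$. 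By $J$-stability of $R$, this multiplier must lie on $S^1$ for small $t$, which is impossible by the maximum principle applied to $\log|R_t'(p)|$. Hence $R$ has no indifferent cycles, and by the bijection, neither does $\tilde R$.

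The remaining task is to show that no critical point of $R$ lies in $J(R)$. Using $C(R)=C(R_2)\cup R_2^{-1}(C(R_1))$, an offending $c\in C(R)\cap J(R)$ either satisfies $R_2(c)\in C(R_1)\subset C(\tilde R)$, in which case $R_2(c)$ is a critical point of $\tilde R$ in $J(\tilde R)$, or $c\in C(R_2)$; the symmetric decomposition $\tilde R=R_2\circ R_1$ reduces both to the same configuration. I would then perturb the factor \emph{not} responsible for the critical relation (say $R_1$ when $c\in C(R_2)$) to obtain a holomorphic family in which the forward orbit of $c$ moves. By $J$-stability of the composition, $J$ moves by a holomorphic motion that would otherwise pin the critical orbit; arranging the perturbation to make the orbit displacement transverse to this motion yields the required contradiction, and hyperbolicity of $\tilde R$ then follows from that of $R$ by symmetry.

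The main obstacle is this last step. The indifferent-cycle case is clean because the multiplier is an explicit algebraic function of the perturbation parameter and the maximum principle supplies an immediate contradiction. For critical points in the Julia set there is no analogous identity, and one must verify that the $(2\deg R_i+1)$-dimensional deformation space of the innocent factor actually contains directions along which the forward orbit of the critical point separates from the holomorphic motion of $J$; carrying this out symmetrically for $R$ and $\tilde R$, so that no degenerate configuration escapes, is where the real work lies.
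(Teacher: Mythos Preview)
Your proposal has a genuine gap exactly where you flag it: the step ruling out critical points of $R$ in $J(R)$ is only sketched, and the transversality you invoke (``the forward orbit separates from the holomorphic motion of $J$'') is not established. Since the holomorphic motion $h_t$ of $J(R)$ is itself determined by the family $R_t$, saying ``choose a perturbation of the innocent factor so that the orbit of $c$ moves off $h_t$'' is circular until you pin down $h_t$ independently; you would need an argument that the $(2\deg R_i+1)$ directions in the innocent factor are not all absorbed by the motion, and you do not give one. Note also that your first step (excluding indifferent cycles via an explicit multiplier perturbation) is correct but superfluous: the paper quotes the standard fact that a $J$-stable map is hyperbolic iff $Cr(R)\subset F(R)$, so only the critical-point step is needed, and that is precisely the step you leave open.

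The paper bypasses all of this with a short combinatorial argument that you are missing. From $J$-stability one knows $R$ has no critical relations on $J(R)$; in particular every critical point in $J(R)$ is simple (local degree $2$) and no two of them have the same $R$-image. Now suppose $x\in R_2^{-1}(Cr(R_1))\cap J(R)$. Then every point of $R_2^{-1}(R_2(x))$ lies in $J(R)$, is a critical point of $R$, and has the same $R$-image $R(x)$; the absence of critical relations forces $R_2^{-1}(R_2(x))=\{x\}$. Since $\deg R_2\ge 2$, this makes $x$ a critical point of $R_2$, while $R_2(x)\in Cr(R_1)$, so the local degree of $R$ at $x$ is at least $4$---contradiction. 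Hence $R_2^{-1}(Cr(R_1))\subset F(R)$, and pushing forward by the semiconjugacy $R_2$ gives $Cr(R_1)\subset F(\tilde R)$. The symmetric argument (using $J$-stability of $\tilde R$) gives $Cr(R_2)\subset F(R)$, so $Cr(R)\subset F(R)$ and $R$ is hyperbolic. No perturbation families, no transversality, no maximum principle---just the chain-rule decomposition of $Cr(R)$ and a degree count.
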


Let $Cr(R)$ denote the set of critical points of $R$. It is well known (see for
instance \cite{L}),
that a $J$-stable map $R$ is hyperbolic if and only if the set $Cr(R)$ belongs
to $F(R)$.

\begin{proof}

Since $R$ is $J$-stable, then $R$ is in general position with respect to the
Julia set, that is, $R$ has no critical relations on $J(R)$. In particular, the
local degree of each critical point of $R$ is $2$. To prove the claim,
we  will show that there are no critical points in $J(R)$. First notice
that as a consequence of the Chain Rule, we have the equation 
$$Cr(R)=Cr(R_1\circ R_2)=R_2^{-1}(Cr(R_1))\cup Cr(R_2).$$

Let $x$ be a point in $R_2^{-1}(Cr(R_1))\cap J(R)$, since $J(R)$ is completely
invariant under $R$,  every point in $R_2^{-1}(R_2(x))$ belongs to $J(R)$ and
is a critical point of $R$. Also, because  there are no critical relations in
$J(R)$, the set $R_2^{-1}(R_2(x))$ consists of only one point. However $R_2$ has
degree at least $2$, hence $x$ is a critical point of $R_2$, but $R_2(x)$ is a
critical point of $R_1$, which implies that the local degree of $R$ in $x$ is at
least $4$. This contradicts the fact that there are no critical relations in
$J(R)$. Then $R_2^{-1}(Cr(R_1))$ belongs to the Fatou set $F(R)$.

There are two semiconjugacies between $R$ and
$\tilde{R}$ as shown in the following diagram:
$$\begin{CD} \C @>R>> \C\\
@V{R_2}VV @VV{R_2}V\\
\C @>\tilde{R}>> \C\\
@V{R_1}VV @VV{R_1}V\\
\C @>{R}>> \C\end{CD} $$
The first semiconjugacy, in fact $R_2$, sends $F(R)$ to $F(\tilde{R})$, hence
$Cr(R_1)=R_2(R_2^{-1}(Cr(R_1)))$ belongs to $F(\tilde{R})$. By
the same argument $Cr(R_2)$ is a subset of $F(R)$. Altogether 
$Cr(R)$ belongs to $F(R)$. Therefore the map $R$ is hyperbolic and, by the
symmetry of the argument, the map $\tilde{R}$ is also hyperbolic.

\end{proof}

The previous theorem has the following corollary which already was noted in
\cite{MakFarEast}.

\begin{corollary}\label{cor.hyp} The following statements are equivalent
\begin{itemize}
 \item The map $R$ is hyperbolic.
\item There exist $n>1$ such that $R^n$ is $J$-stable in $Rat_{d^n}$.
\item For every $n\geq 1$, the map $R^n$ is $J$-stable in $Rat_{d^n}$.
\end{itemize}
\end{corollary}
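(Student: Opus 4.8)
The plan is to prove the cyclic chain of implications (1)$\,\Rightarrow\,$(3)$\,\Rightarrow\,$(2)$\,\Rightarrow\,$(1), where (1), (2), (3) denote the three listed statements in the order given.

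For (1)$\,\Rightarrow\,$(3) I would first record the elementary fact that hyperbolicity is invariant under passing to iterates, i.e. $R$ is hyperbolic if and only if $R^n$ is hyperbolic. This holds because $J(R^n)=J(R)$, while the Chain Rule gives $Cr(R^n)=\bigcup_{j=0}^{n-1}(R^j)^{-1}(Cr(R))$; surjectivity of $R^j$ together with a short reindexing then shows that the forward orbit $\bigcup_{k\ge 1}R^{nk}(Cr(R^n))$ is exactly $\bigcup_{m\ge 1}R^m(Cr(R))$, so the postcritical set of $R^n$ equals that of $R$, and disjointness from the common Julia set is equivalent for the two maps. Granting this, if $R$ is hyperbolic then so is each $R^n$, and by the classical structural stability of hyperbolic rational maps (see \cite{MSS}) the map $R^n$ is $J$-stable in $Rat_{d^n}$; since $n\ge 1$ was arbitrary this gives (3). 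The implication (3)$\,\Rightarrow\,$(2) is immediate, e.g.\ taking $n=2$.

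The implication (2)$\,\Rightarrow\,$(1) is where Theorem \ref{thm.decom} enters. Suppose $R^n$ is $J$-stable for some $n>1$. Write $R^n=R_1\circ R_2$ with $R_1=R$ and $R_2=R^{\,n-1}$; since $\deg(R)\ge 2$ and $n-1\ge 1$, both factors have degree $>1$. Moreover $R_2\circ R_1=R^{\,n-1}\circ R=R^n$ as well, so in the notation of Theorem \ref{thm.decom} both $R$ and $\tilde R$ coincide with the single map $R^n$, which is $J$-stable by hypothesis. Theorem \ref{thm.decom} therefore yields that $R^n$ is hyperbolic, and the iterate criterion of the previous paragraph gives that $R$ itself is hyperbolic.

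I do not expect a genuine obstacle here: the entire substance of the statement is carried by Theorem \ref{thm.decom}, and what remains is bookkeeping. The two points requiring a little care are (a) checking the degree bounds so that the decomposition $R^n=R\circ R^{\,n-1}$ has both factors of degree at least $2$ — precisely where the hypothesis $n>1$ is used — and (b) the two standard inputs that hyperbolic maps are $J$-stable in their parameter space and that hyperbolicity is invariant under iteration; both are classical and can simply be cited. A reader might also wonder whether "$J$-stable in $Rat_{d^n}$" is the appropriate ambient family, but since $R^n$ is an honest rational map of degree $d^n$ and the hyperbolic locus is open in $Rat_{d^n}$, this causes no difficulty.
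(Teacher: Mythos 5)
Your proof is correct. The paper itself does not spell out an argument (it defers to \cite{MakFarEast} and offers only a heuristic about line-field dimensions), but your derivation --- writing $R^n = R\circ R^{n-1}$ so that $\tilde R = R^{n-1}\circ R = R^n$ and both $J$-stability hypotheses of Theorem \ref{thm.decom} collapse to the single assumption on $R^n$, then transferring hyperbolicity between $R$ and its iterates via the equality of postcritical sets, with $(1)\Rightarrow(3)$ handled by the standard fact that hyperbolic maps are structurally (hence $J$-) stable --- is exactly the deduction the authors intend by placing this as a corollary of Theorem \ref{thm.decom}.
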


Intuitively, one could understand  the facts above considering the
dimension of the space of invariant line fields on the Julia set $J(R)$.  This
dimension is comparable with the number of critical values of $R^n$ on $J(R)$,
which grows linearly with respect to iteration. On the other hand, the condition
of $J$-stability in $Rat_{d^n}(\C)$ requires exponential growth of the
dimensions with respect to iteration, but these dimensions  should be comparable
with the number of critical points of $R^n$ on $J(R)$. The incompatibility of
the rate of these growths is a contradiction to the existence of invariant line
fields on $J(R)$.

Now let us consider for a given rational map $R$ the Hurwitz space $H(R)$,
that is, the set of
all rational maps with the same combinatorics for the first iteration,
equivalently $$H(R)=\{Q\in Rat(\C): \exists \phi \textnormal{ and } \psi \in
Homeo(\bar{\C}) \textnormal{ such that } Q\circ \phi=\psi \circ R\}.$$ 
When $R$ is a rational map in general position, with   $\deg(R)=d$, the space 
$H(R)$ is open and dense in the space $Rat_d(\C)$. 
Note that $H(R^n)$ consists of compositions of the form $R_1\circ R_2\circ ...
\circ R_n$ with $R_i\in H(R)$. It is not clear if $H(R^n)$ consists of all
compositions of this form. However, $$\dim(H(R^n))\leq n \dim H(R).$$
Now we can ask the analog of Corollary \ref{cor.hyp} for Hurwitz spaces:
\begin{enumerate}
 \item Assume that $R^n$ is $J$-stable in $H(R^n)$, is it true that $R^k$ is
$J$-stable at $H(R^k)$ for $k\neq n$? We expect an affirmative answer for
$k<n$.
\item Assume that $R^n$ is $J$-stable in $H(R^n)$ for all $n$, is it true that
$R$ is hyperbolic?
\end{enumerate}

The second question is actually a modified version of Fatou conjecture. These
questions make sense for entire and meromorphic transcendental maps.

The conditions of Theorem \ref{thm.decom} are too strong, it is enough that 
one of the maps, say $R=R_1\circ R_2$ is $J$-stable.

\begin{proposition}\label{prop.sym}  Let $R=R_1\circ R_2$ and
$\tilde{R}=R_2\circ R_1$, such that $deg(R_i)>1$ for $i=1,2$. If $R$ is
$J$-stable then $\tilde{R}$ is $J$-stable.
\end{proposition}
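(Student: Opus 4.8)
The plan is to exploit the semiconjugacies $R_2: R \to \tilde R$ and $R_1: \tilde R \to R$ to transport the holomorphic motion of $J(R)$ to a holomorphic motion of $J(\tilde R)$. Concretely: since $R$ is $J$-stable in a holomorphic family $\{R_w\}$ near $w_0$ (where $R_{w_0} = R$), by the $\lambda$-lemma the $J$-stability gives a holomorphic motion $h_w : J(R) \to J(R_w)$ conjugating $R$ to $R_w$. I would first observe that $J$-stability of $R$ already forces $R$ to be in general position on its Julia set and, by Theorem \ref{thm.decom}'s internal argument (the critical-relations analysis applies verbatim using only $J$-stability of $R$), that $Cr(R_2) \subset F(R)$ and $R_2^{-1}(Cr(R_1)) \subset F(R)$; hence $R_2$ restricted to a neighborhood of $J(R)$ is an unbranched covering onto a neighborhood of $J(\tilde R)$, and $R_2(J(R)) = J(\tilde R)$. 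Similarly $R_1$ is unbranched near $J(\tilde R)$.

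Next I would deform the factor. In the family $\{R_w\}$ the map $R$ is a given fixed composition $R_1 \circ R_2$; the natural move is to let only $R_2$ vary, say along a holomorphic family $R_{2,t}$ with $R_{2,0} = R_2$, keeping $R_1$ fixed, and set $R_t = R_1 \circ R_{2,t}$, $\tilde R_t = R_{2,t} \circ R_1$. Then $\{R_t\}$ is a holomorphic subfamily through $R$, so $R_t$ is $J$-equivalent to $R$ via a holomorphic motion $h_t$ for small $t$. The key step is to push $h_t$ forward by the covering maps: because $R_{2,t}$ is unbranched over a neighborhood of $J(\tilde R)$ for small $t$ (the covering property is open, as $Cr(R_{2,t})$ stays in the Fatou set of $R_t$ by the same critical-relation argument applied to $R_t$), one can define $\tilde h_t : J(\tilde R) \to J(\tilde R_t)$ by the relation $\tilde h_t \circ R_{2,0} = R_{2,t} \circ h_t$ on $J(R)$ — this is well-defined precisely because $h_t$ respects fibers of $R_{2,0}$ over $J(\tilde R)$, which follows from $h_t$ being a conjugacy together with the fiber structure being detectable dynamically (two points in $J(R)$ have the same image under $R_{2,0}$ iff they have the same image under $R_t^n \circ R_{2,0}$-type relations, and a conjugacy preserves this). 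Then $\tilde h_t$ is a quasiconformal homeomorphism, depends holomorphically on $t$, and conjugates $\tilde R$ to $\tilde R_t$; so $\tilde R$ is $J$-stable in $\{\tilde R_t\}$, and since every holomorphic deformation of $\tilde R$ arises (up to the relevant Hurwitz-space identifications) from such a deformation of the factors, $\tilde R$ is $J$-stable.

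The main obstacle is the well-definedness and quasiconformality of the pushed-forward motion $\tilde h_t$: one must check that $h_t$ genuinely descends through $R_{2,0}$, i.e. that $R_{2,0}(x) = R_{2,0}(y)$ implies $R_{2,t}(h_t(x)) = R_{2,t}(h_t(y))$ for $x,y \in J(R)$. This is where the conjugacy and the covering structure must be combined carefully; the cleanest route is to note that the grand orbit equivalence relation of $R$ on $J(R)$ refines to the partition into $R_{2,0}$-fibers intersected with $J(R)$ in a dynamically canonical way (fibers of $R_{2,0}$ correspond to the kernel of the semiconjugacy $R_{2,0}$, which is the same as point-preimages under $\tilde R \circ R_{2,0} = R_{2,0} \circ R$ modulo the already-known fiber size), and a conjugacy automatically preserves grand orbits. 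Once descent is established, quasiconformality of $\tilde h_t$ in Pesin's sense is immediate since it is locally, via the unbranched coverings $R_{2,t}$, a composition of holomorphic maps with the quasiconformal $h_t$; and holomorphic dependence on $t$ is inherited likewise. A secondary technical point to address is matching the ambient holomorphic family: one should either work directly in the Hurwitz space $H(\tilde R)$, using that deformations of $\tilde R = R_2 \circ R_1$ and of $R = R_1 \circ R_2$ correspond under swapping factors, or invoke that $J$-stability is an intrinsic property independent of the chosen family (the characterization via the motion of $J$ and the absence of critical relations), so it suffices to exhibit one nontrivial holomorphic family in which $\tilde R$ moves the same way that $R$ does.
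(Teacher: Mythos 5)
Your overall strategy—push the $J$-stability data from $R$ to $\tilde R$ through the semiconjugacy $R_2$—is the same as the paper's, but the paper pushes \emph{Beltrami coefficients} while you push the \emph{holomorphic motion} directly, and this choice is exactly where your two genuine gaps appear.

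First, you assert that the critical-relations argument from Theorem~\ref{thm.decom} gives $Cr(R_2)\subset F(R)$ ``using only $J$-stability of $R$.'' It does not. From $J$-stability of $R$ alone one gets that $R$ is in general position on $J(R)$ and hence that $R_2^{-1}(Cr(R_1))\subset F(R)$; but the paper obtains $Cr(R_2)\subset F(R)$ only after assuming $\tilde R$ is $J$-stable, running the symmetric argument to get $R_1^{-1}(Cr(R_2))\subset F(\tilde R)$, and pushing forward by $R_1$. In Proposition~\ref{prop.sym} you are \emph{trying to prove} $\tilde R$ is $J$-stable, so you cannot assume it; and I see no way to exclude a critical point of $R_2$ lying on $J(R)$ using only general position of $R$ (a single isolated critical point of $R_2$ on $J(R)$ produces no forbidden critical relation for $R$). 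Since your whole mechanism requires $R_2$ to be an unbranched covering near $J(R)$, this is a real hole, not a detail. The paper's route sidesteps this: the push-forward $(R_2)_*\mu_t$ is defined off the (finite, measure-zero) critical set, so no covering property is needed.

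Second, the well-definedness of $\tilde h_t$ via $\tilde h_t\circ R_{2,0}=R_{2,t}\circ h_t$ is not justified by your grand-orbit argument. A conjugacy of $R$ does preserve grand orbits of $R$, but the fibers of $R_2$ are a strict refinement of the $R$-fiber partition, and that refinement is \emph{not} intrinsic to the dynamics of $R$: two points with $R(x)=R(y)$ but $R_2(x)\neq R_2(y)$ are grand-orbit equivalent yet lie in distinct $R_2$-fibers, and nothing in the $R$-dynamics distinguishes the $R_2$-refinement from any other factorization. What actually makes descent work for your specific family $R_t=R_1\circ R_{2,t}$ is a continuity argument: from $h_t\circ R=R_t\circ h_t$ one deduces that $R_{2,t}(h_t(x))$ and $R_{2,t}(h_t(y))$ lie in the same $R_1$-fiber, they coincide at $t=0$, and that $R_1$-fiber is discrete near $J(\tilde R_t)$ (which itself requires $Cr(R_1)\cap J(\tilde R_t)=\emptyset$, available from general position of $R_t$); hence they coincide for small $t$. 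That is a correct argument, but it is not the one you wrote. It also exposes that your proof only treats the one-factor deformations $R_1\circ R_{2,t}$; the paper's operator $(R_2)_*$ on invariant Beltrami differentials is defined for \emph{any} holomorphic family through $R$ (the $R$-invariance of $\mu_t$ is exactly what makes $(R_2)_*\mu_t$ independent of the choice of local inverse of $R_2$), which is why the paper can assert the bijectivity on line fields as ``the heart of the proof'' without restricting the deformation. Your closing remark that $J$-stability is ``intrinsic'' is not a substitute: $J$-stability is defined relative to a family, and one must either exhibit deformations of $\tilde R$ spanning the right family or argue at the level of the line-field operator as the paper does.
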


We will just sketch the proof of Proposition \ref{prop.sym}. Let us denote by
$QC_J(R)$, the $J$-\textit{stability component} of $R$. This is
the path connected component of the $J$-equivalence class of $R$ containing $R$.
We need the following theorem which was proved in \cite{McMSull}, see also
\cite{MSS}.

 \begin{theorem}[McMullen-Sullivan]\label{thm.McMSull} On every analytic family
$H$, the set of $J$-stable maps is open and dense. Moreover, the set of
structurally stable maps is also dense in $H$. 
 \end{theorem}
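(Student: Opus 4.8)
The plan is to follow the classical argument of Ma\~n\'e--Sad--Sullivan, in the form refined by McMullen--Sullivan in \cite{McMSull}. Openness of the set of $J$-stable maps is essentially built into the definition: if $w_0$ is $J$-stable with stability neighborhood $V$ and holomorphic conjugacies $h_w : J(R_{w_0}) \to J(R_w)$, then for any $w_1 \in V$ the maps $h_w \circ h_{w_1}^{-1}$, with $w$ ranging over a connected neighborhood of $w_1$ inside $V$, witness $J$-stability at $w_1$. So the real content is density, together with the ``moreover'' clause on structural stability.

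For density of the $J$-stable locus, the key observation is that the number $N(w)$ of attracting cycles of $R_w$ is finite and uniformly bounded: by Fatou's theorem it never exceeds $2\deg(R_w)-2$, and the degree is locally constant on an analytic family. Moreover $N$ is lower semicontinuous, since an attracting cycle (multiplier of modulus $<1$) persists under small perturbations by Rouch\'e's theorem and the implicit function theorem. Hence each set $W_k = \{w : N(w) \geq k\}$ is open, only finitely many are nonempty, and $\bigcup_k \partial W_k$ is closed and nowhere dense; on its complement, which is therefore open and dense, $N$ is locally constant. I claim every such $w_0$ is $J$-stable. On a neighborhood $V$ of $w_0$ each periodic point moves holomorphically: a periodic point $z(w)$ of period $n$ solves the analytic equation $R_w^n(z)=z$, and it can fail to continue holomorphically only through a collision with another periodic point, i.e. when its multiplier becomes a root of unity; but if the multiplier of some cycle of $R_{w_0}$ lies on the unit circle and is non-constant in $w$, then by the open mapping theorem arbitrarily nearby parameters turn that cycle attracting without destroying any existing attracting cycle, forcing $N$ to jump, contrary to local constancy. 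Thus on $V$ every cycle is attracting, repelling, or persistently indifferent; the repelling periodic points move holomorphically and stay repelling, and since they are dense in $J(R_{w_0})$ the $\lambda$-lemma extends their motion to a holomorphic motion $h_w : J(R_{w_0}) \to \bar{\C}$. By continuity $h_w$ conjugates $R_{w_0}$ to $R_w$ on the dense set of repelling cycles, hence on all of $J(R_{w_0})$, so $h_w(J(R_{w_0}))$ is a closed completely invariant set containing all repelling cycles of $R_w$, that is, $h_w(J(R_{w_0})) = J(R_w)$; and $h_w$ is quasiconformal in Pesin's sense by the $\lambda$-lemma. Therefore $R_{w_0}$ is $J$-stable, proving density.

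For the ``moreover'' clause I would invoke the quasiconformal deformation theory of rational maps developed in \cite{McMSull}. Within a single $J$-stability component the obstruction to structural stability is the presence of extra quasiconformal deformations, roughly invariant line fields on the Julia set together with unabsorbed bifurcations among the Fatou components; the structurally stable parameters are exactly those at which the dimension of the deformation (Teichm\"uller) space of $R_w$ is locally maximal, and since that space is parametrized holomorphically, the locus where the dimension drops is a proper analytic subset. Deleting it from the already open and dense $J$-stable locus leaves an open dense set of structurally stable maps.

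The step I expect to be the main obstacle is making the implication ``$N$ locally constant $\Rightarrow$ all periodic points move holomorphically without change of type'' fully rigorous, in particular controlling collisions of periodic points and handling persistently indifferent (parabolic) cycles, for which the naive local analytic normal form is unavailable; and, for the last clause, the density of structural stability genuinely rests on the full strength of the McMullen--Sullivan deformation theory rather than on the elementary arguments used elsewhere in this section.
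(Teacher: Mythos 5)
The paper does not prove this theorem at all: it is stated as a black box and attributed to \cite{McMSull} (with \cite{MSS} as an alternative reference). So there is nothing in the paper to compare your proof against line by line; the right question is whether your sketch is a faithful account of the Ma\~n\'e--Sad--Sullivan / McMullen--Sullivan argument. It is.

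Your openness remark, the counting of attracting cycles via the Fatou bound $2d-2$, the lower semicontinuity of that count, the observation that on the locus where the count is locally constant no cycle can change type (since a nonconstant multiplier crossing the unit circle would create an extra attracting cycle at nearby parameters), the holomorphic motion of repelling periodic points, and the extension by the $\lambda$-lemma to a quasiconformal conjugacy on $J$ --- this is exactly the classical density argument, correctly assembled. The ``moreover'' clause genuinely does require the full quasiconformal deformation theory of \cite{McMSull} (the Teichm\"uller space of a rational map and the analysis of where its dimension is locally maximal); you are right that this cannot be reduced to the elementary cycle-counting argument, and right to flag it as such. Two small points worth tightening if you were to write this out fully: (i) the implicit-function-theorem breakdown for a period-$n$ orbit occurs precisely at multiplier a root of unity, and one should say explicitly that persistently indifferent cycles (constant multiplier on the unit circle) are allowed and cause no trouble because they never change type; (ii) after defining $h_w$ on the closure of the repelling cycles, one should note that this closure \emph{is} $J(R_{w_0})$ and that equivariance on a dense set plus continuity forces $h_w \circ R_{w_0} = R_w \circ h_w$ on all of $J(R_{w_0})$, so that $h_w(J(R_{w_0}))$ is a completely invariant closed set with more than two points and hence contains, and by a symmetric argument equals, $J(R_w)$. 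Neither of these is a gap in the idea, only in the writing. Since the paper itself offers no proof, your sketch is a reasonable substitute and in the spirit of the cited sources.
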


Let $H$ be an analytic family, by Theorem \ref{thm.McMSull}  the set
$U=QC_J(R)\cap H$ is an open set. Every holomorphically embedded disk  $D$ 
in $U$ containing $R$, depending on a complex parameter $t$, is equivalent to a
family of Beltrami coefficients $\mu_t$, whose associated quasiconformal maps
$f_t$ conjugate $R$ to $R_t$ along $D$. The maps $f_t$ form a holomorphic motion
of $J(R)$, using S\l{}odkowski's Extended $\lambda$-Lemma, the maps $f_t$ can be
extended to a neighborhood of $J(R)$ for every $t$. Moreover, the extension can
be taken to preserve the dynamics (see \cite[Theorem 1.7]{Slodkequiv}). Now
consider the push-forward operator $(R_2)_*$ which sends the family $\mu_t$ to
the family of Beltrami differentials $(R_2)_* \mu_t$ defined on a neighborhood
$W$ of $\tilde{R}$. The complementary components of $W$ can be taken to be
simply connected. With this choice, we can extend the maps $(R_2)_* \mu_t$ to
the whole sphere by attaching, with surgery, Blaschke maps on each complementary
component.
Solving the Beltrami equation for the resulting Beltrami coefficients will
induce a
family rational maps, $J$-equivalent to $\tilde{R}$. Thus $\tilde{R}$ is
$J$-stable.

The heart of the proof lies on the fact that the semiconjugacy defines an
bijective operator in the space of invariant line fields on the
Julia set. 

\section{A semigroup associated to a rational map $R$.}

In this section, for every rational map $R$ we construct a suitable semigroup
$S_R$, such that the space of analytic equivalences of $S_R$ uniformizes
the space of virtual decompositions of $R$. The semigroup $S_R$ will be a
semigroup of correspondences on the affine part $\mc{A}_R$ of $R$, as defined by
M. Lyubich and Y. Minsky in \cite{LM}.

Let us recall first  Lyubich and Minsky's construction, given  a rational
map $R$ defined in the Riemann sphere $\bar{\C}$, consider the inverse limit
$$\mc{N}_R=\{\h{z}=(z_1,z_2,...): R(z_{n+1})=z_n\}.$$ 

The natural extension of $R$ is the map $\h{R}:\mc{N}_R\rightarrow \mc{N}_R$
given in coordinates by $\h{R}(\h{z})_n=R(z_n)$. There is a family of maps 
$\pi_n:\mc{N}_R\rightarrow \bar{\C}$, the coordinate
projections, defined by $\pi_n(\h{z})=z_n$, which semiconjugates the action of
$\h{R}$ with $R$, that is $\pi_n\circ \h{R}=R\circ \pi_n$. By endowing
$\mc{N}_R$ with the topology of the restriction of Tychonoff topology, one
can show that the map $\h{R}$ is a
homeomorphism. The regular part $\mc{R}_R$ is
the maximal subset of $\mc{N}_R$ which admits a Riemannian structure, of complex
dimension one, compatible with the coordinate projections $\pi_n$. A leaf is a
path connected component of the regular part. A theorem by Lyubich and Minsky
(see \cite[Lemma 3.3]{LM}) states that, besides leaves associated to Herman
rings, all leaves are simply connected. The affine part $\mc{A}_R$ consists of
the regular points whose leaves are conformally isomorphic to complex plane
$\C$. Let $\mc{C}=\{ a_1,a_2,...,a_n\}$ be a repelling periodic cycle for $R$.
An invariant lift of $\mc{C}$ is the set of points $\h{a}$ in $\mc{N}_R$ such
that all the coordinates of $\h{a}$ belong to $\mc{C}$. Invariant lifts of
periodic repelling points belong to the affine part. Moreover,  the uniformazing
function of the leaves containing these invariant leaves is a Poincar\'{e}
function associated to $\mc{C}$. Since there are infinitely many of repelling
periodic cycles, the affine part consist of an infinite number of leaves.  

Let us remind that a holomorphic correspondence $K$ is a subset of a
product of complex spaces $B\times C$ such that, $K$ is the union of
countably many analytic varieties and, the projections are holomorphic and the
projection of $K$ to the first coordinate is surjective.

In a fiber $F$, of the form $\pi_n^{-1}(x)$ for $x\in\bar{\C}$, we can define
the set of deck transformations, or dual monodromies, which are given by the
correspondences $\pi_n^{-1}\circ \pi_n$. The fact that the
conformal structure on leaves is compatible with the projections $\pi_n$, means
that the leaf admits a conformal structure such that, in this structure, the
maps are holomorphic. In particular, given two leaf saturated sets $\mc{B}$ and
$\mc{C}$  in $\mc{A}_R$. If the cardinality of leaves in $\mc{B}$ is
at most countable, then $(\pi_n|_{\mc{B}})^{-1}\circ \pi_n|_\mc{C}$ is a
holomorphic correspondence in $\mc{B}\times \mc{C}$, here $\pi_n|_D$ denotes the
restriction of the map $\pi_n$ to the set $D$. When $\mc{B}=\mc{B}$, then
$(\pi_n|_{\mc{B}})^{-1}\circ \pi_n|_\mc{B}$ is a semigroup. 

If $L$ and $L'$ are two leaves in $\mc{A}_R$, then the map $\h{R}$ sends 
$(\pi_n|_{L'})^{-1}\circ \pi_n|_L$ to $(\pi_n|_{R(L')})^{-1}\circ
\pi_n|_{R(L)}$. In particular if $\mc{B}$ is a $\h{R}$-invariant, leaf
saturated, set in $\mc{A}_R$, the action of $\h{R}$ commutes with the action
of $(\pi_n|_{\mc{B}})^{-1}\circ \pi_n|_\mc{B}$ in $\mc{A}_R$.

A leaf $L$ in $\mc{A}_R$ is called periodic if there exist some $n$ such that 
$L$ is invariant under $\h{R}^n$. Let $\mc{C}(R)$ the set of all periodic leaves
in $\mc{A}_R$. The semigroup of deck correspondences is holomorphic
correspondence $(\pi_n|_{\mc{C}(R)})^{-1}\circ \pi_n|_{\mc{C}(R)}$ and will be
denoted by $\pi_n^{-1}\circ \pi_n$.

Now, let us define the semigroup
$S_R$ as the semigroup $\langle \mc{C}(R), \h{R},\pi_n^{-1}\circ
\pi_n\rangle$, generated by the constants maps on the set $\mc{C}(R)$, the
dynamics of $\h{R}$ and  $\pi_n^{-1}\circ
\pi_n$. We refer to $\mc{C}(R)$ as the
set of constants of $S_R$ and the dynamical part of $S_R$ will be the
semigroup generated by $\h{R}$ and $\{\pi_n^{-1}\circ \pi_n\}$.
  
\begin{definition}
A marked monomorphism $\rho:S_R\rightarrow S_{R_1}$ is a monomorphism that sends
constants to constants, maps analytically leaves to leaves, and
sends the dynamical part of $S_R$ to the dynamical part of $S_{R_1}$. That is,
the map $\rho$
sends the semigroup generated by $\h{R}$ to the semigroup generated by
$\h{R}_1$ and, the  action of deck transformations to the action of deck
transformations. 
\end{definition}

By definition, a marked monomorphism also sends fibers, of the family of
projections $\pi_n$, to fibers. An analytical isomorphism is a marked
monomorphism $\rho$ whose inverse is also a marked monomorphism.

\begin{theorem}\label{th.equivsem}
If $S_1$ and $S_2$ are semigroups associated to $R_1$ and $R_2$,  and let
$\psi:S_1\rightarrow S_2$ be a marked monomorphism of semigroups, then up to
M\"obius conjugacy of the maps $R_1$ and $R_2$, there exist $\Psi:\C \rightarrow
\C$ such that the following diagram
commutes 
$$\begin{CD} S_1 @>\psi>> S_2\\
@V{\pi}VV @VV{\pi}V\\
C @>\Psi>> C\\
\end{CD} $$
Where $C$ and $C'$ denote either the sphere $\bar{\C}$, the plane $\C$, or the
puncture plane $\C^*$ whenever the exceptional sets of $R_1$, $R_2$ have $0$,
$1$ or $2$ points respectively. Moreover, the map $\Psi$ is M\"obius, if and
only if $\psi$ is an analytic isomorphism.

\end{theorem}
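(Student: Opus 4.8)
The plan is to produce $\Psi$ from the action of $\psi$ on a single periodic leaf and then use that a marked monomorphism preserves the global fibre structure to see that the result is coherent. First fix a repelling periodic cycle $\mc{C}$ of $R_1$, choose an invariant lift $\h a$ of $\mc{C}$, and let $L\subset\mc{A}_{R_1}$ be the leaf through $\h a$. Then $L$ is conformally $\C$, and in a uniformizing coordinate the projection $\pi_n|_L$ is a Poincar\'e function $\varphi_L$ of $\mc{C}$. Recall (see \cite{LM}) that $\varphi_L$ maps \emph{onto} $\bar\C$ with the sole exception of the exceptional set $E(R_1)$, and that two points of $L$ have the same $\pi_n$-image exactly when they lie in one orbit of the monodromy group $\Gamma_L\subset\pi_n^{-1}\circ\pi_n$ of deck transformations fixing $L$, so that $\varphi_L$ identifies $L/\Gamma_L$ with $\bar\C\setminus E(R_1)$. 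After a M\"obius change of coordinates on the spheres of $R_1$ and $R_2$ we may assume $E(R_i)\subseteq\{0,\infty\}$; then $C:=\bar\C\setminus E(R_1)$ and $C':=\bar\C\setminus E(R_2)$ are literally $\bar\C$, $\C$ or $\C^*$ according to whether $E(R_i)$ has $0$, $1$ or $2$ points, which accounts for the trichotomy in the statement.

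Since $\psi$ carries constants to constants, $\psi(\h a)$ is an invariant lift of a repelling cycle of $R_2$ lying in a periodic leaf $L'$, and $\psi$ maps $L$ analytically and bijectively onto $L'$, so in uniformizing coordinates it acts by an affine isomorphism $A\colon\C\to\C$ conjugating $\Gamma_L$ to $\Gamma_{L'}$. Hence $A$ descends through $\varphi_L$ and $\varphi_{L'}$ to a holomorphic map $\Psi\colon C\to C'$ with $\varphi_{L'}\circ A=\Psi\circ\varphi_L$; since $\varphi_L$ is an open surjection onto $C$ this defines $\Psi$ on all of $C$, and since $\varphi_{L'}$ is onto $C'$ the map $\Psi$ is surjective. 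To see that $\Psi$ does not depend on the chosen leaf and that $\pi_n\circ\psi=\Psi\circ\pi_n$ holds on all of $\mc{C}(R_1)$ — which is the commuting square of the theorem — I would use that $\psi$ preserves the whole deck semigroup $\pi_n^{-1}\circ\pi_n$ on $\mc{C}(R_1)$ and hence sends $\pi_n$-fibres to $\pi_n$-fibres: the value $\Psi(x)$ is then determined by the fibre $\pi_n^{-1}(x)$ alone, so the local formulas from different periodic leaves agree on overlaps and patch together. Since moreover $\psi$ maps the dynamical semigroup $\langle\h R_1\rangle$ onto $\langle\h R_2\rangle$, whence $\psi(\h R_1)=\h R_2$, and $\pi_n\circ\h R=R\circ\pi_n$, one also gets for free that $\Psi\circ R_1=R_2\circ\Psi$ on $C$.

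For the last clause, if $\psi$ is an analytic isomorphism I would apply the construction to $\psi^{-1}$ to get a holomorphic $\Phi\colon C'\to C$; functoriality of the fibre-quotient construction gives $\Phi\circ\Psi=\mathrm{id}_C$ and $\Psi\circ\Phi=\mathrm{id}_{C'}$, so $\Psi$ is a biholomorphism between two of $\bar\C$, $\C$, $\C^*$, and every such biholomorphism is the restriction of a M\"obius transformation (on $\C^*$, of the form $z\mapsto az^{\pm1}$). Conversely, if $\Psi$ is M\"obius then $R_2=\Psi R_1\Psi^{-1}$, so conjugation by $\Psi$ induces a tautological analytic isomorphism $c_\Psi\colon S_1\to S_2$ and $c_\Psi^{-1}\circ\psi$ is a marked endomorphism of $S_1$ inducing the identity on $C$; a rigidity argument then forces it to be the identity — on each periodic leaf it is an affine self-map covering $\mathrm{id}_C$ through $\varphi_L$, hence lies in $\Gamma_L$, and commuting with $\h R_1$ pins it down — so $\psi=c_\Psi$ is an isomorphism.

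The principal obstacles I anticipate are the coherence step, i.e.\ verifying that the affine maps obtained on different periodic leaves descend to one and the same $\Psi$ (this is exactly where one needs $\psi$ to preserve the \emph{global} deck correspondence, not merely each individual monodromy group), and the rigidity statement used in the converse, that a marked endomorphism of $S_R$ which restricts to the identity on the base must itself be the identity. Identifying $C$ and $C'$ with $\bar\C$, $\C$ or $\C^*$ likewise rests on the surjectivity of Poincar\'e functions onto the complement of the exceptional set, which I would quote from \cite{LM}.
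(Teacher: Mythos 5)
Your proposal takes essentially the same route as the paper's (very terse) proof: a marked monomorphism conjugates the deck correspondence $\pi_n^{-1}\circ\pi_n$ and therefore sends $\pi_n$-fibres to $\pi_n$-fibres, which is exactly what lets you descend to a map $\Psi$ on the base; $\Psi$ is then holomorphic because $\psi$ is analytic on leaves and the leaf structure is, by construction, compatible with the projections; and when $\psi$ is an isomorphism, $\psi^{-1}$ descends to an analytic inverse of $\Psi$, forcing $\Psi$ to be M\"obius. Your write-up simply makes this concrete — realizing the leaf action through Poincar\'e functions and affine automorphisms of $\C$, spelling out the coherence of the descent across different periodic leaves, and identifying $C$, $C'$ with $\bar{\C}$, $\C$, $\C^*$ via the exceptional set — and you also tackle the converse implication in the ``moreover'' clause, which the paper's proof leaves unaddressed. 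One caveat there: the definition of a marked monomorphism only guarantees that $\psi$ sends $\langle\hat{R}_1\rangle$ \emph{into} $\langle\hat{R}_2\rangle$, so a priori $\psi(\hat{R}_1)=\hat{R}_2^{\,k}$ for some $k\geq 1$; your intertwining relation should therefore read $\Psi\circ R_1=R_2^{\,k}\circ\Psi$, and the rigidity argument you sketch for the converse would additionally need to rule out $k>1$, since $\Psi$ could in principle be M\"obius while $\psi$ fails to be surjective onto the dynamical part of $S_2$.
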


\begin{proof}
Since $\psi$ conjugates the action of the deck transformations $\pi_n^{-1}\circ
\pi_n$, it sends fibers of $\pi$ on fibers of $\pi$, hence induces a map $\Psi$
defined on the image of the projections $\pi_n$. The Riemannian structure on
$\mc{A}_R$ is consistent with the projections $\pi_n$, because $\psi$ preserves
the leaf structure on $\mc{C}(R)$ the map $\Psi$ is also analytic. If $\psi$ is
an isomorphism, the map $\psi$ has an inverse which descends to an analytic
inverse of $\Psi$, hence the map $\Psi$ is M\"obius.
\end{proof}

 Let us remind, see for example \cite{Ritt}, two
decompositions $R_1\circ R_2\circ...\circ R_m$ and $P_1\circ P_2\circ ...\circ
P_m$, are called equivalent if there are M\"{o}bius transformations $\gamma_i$,
for $i=1,...,m-1$, such that 
$$P_1=R_1\circ \gamma_1,\, P_i=\gamma_{i-1}^{-1}\circ R_i \circ \gamma_i,
\textnormal{ for } 1<i<m, \textnormal{ and } P_m=\gamma_{m-1}^{-1}\circ R_m.$$

\begin{definition} A rational map $R$, is called \textnormal{prime}, 
\textnormal{indecomposable}, if whenever we have $R=P\circ Q$, where $P$ and $Q$
are rational maps,  then either $P$ or $Q$ belong to $PSL(2,\C)$. A
decomposition of $R=R_1\circ R_2 \circ ... \circ R_n$ is called a
\textnormal{prime decomposition} if, and only if, each $R_i$ is prime of degree
at least $2$ for all $i$.

The rational map $R$ is called virtually decomposable if there exist a number
$n>0$ and prime rational maps $R_1$,...$R_m$ such that $R_1\circ R_2\circ
...\circ R_m$ is a decomposition of $R^n$ non equivalent to $R^n$. 
\end{definition}

Every decomposable rational map is virtually decomposable. As a consequence of
Ritt's theorems for polynomials, every virtually decomposable polynomial is
decomposable. Surprisingly for rational maps this statement is false, M. Zieve
constructed the following counterexample:

\begin{Example}
 Let $R(z)=\frac{(z-1)^2}{(z+1)^2}$, then one can check that 
$$R^2(z)=\frac{4z}{(z+1)^2} \circ z^2.$$
Which is non equivalent to $R\circ R$. It is remarkable that this
example appears already on rational maps of degree 2.
\end{Example}

Let $R$ be a virtually decomposable rational map, such that $R^n$ has other 
decompositions. Hence $R^{kn}$ has also other decompositions for every $k$. Is
it true that eventually there are no new decompositions? In other words, whether
the list of decompositions for the iterates $R^m$ is finitely generated, let
$g(R)$ be the number of generators of this list.

Let $\alpha(R)$ the supremum of the numbers $m$ for which new decompositions of
$R^m$ appear. In general, the number $\alpha(R)$ can be infinite. Clearly the
finiteness of $g(R)$ implies the finiteness of  $\alpha(R)$. It is
a problem to determine the reciprocal also holds, this is equivalent to the non
existence of infinitely many decompositions for a given rational map. 

In \cite{Zieve.Mueller}, P. M\"{u}ller and M. Zieve proved that if $P$ is a
polynomial of degree $d\geq 2$, and not associated to parabolic orbifolds
(definition below), then $\alpha(P)$ is bounded by $\log_2 d$. Moreover, this
bound is sharp, and for some exceptions both $\alpha(R)$ and $g(R)$ can be
infinite.

Michael Zieve suggested the conjecture that with exception of rational maps
associated to parabolic orbifolds, the number $\alpha(R)$ is bounded in terms
of the degree. We believe that, with the same exceptions, the number $G(R)$ is
bounded and is comparable with $\alpha(R) \dim(H(R)/Aff(\C))$, where $Aff(\C)$
acts on $H(R)$ by conjugation.

Let us remind that a parabolic orbifold is a Thurston orbifold $\mc{O}$ with
non negative  Euler characteristic. When the map $R$ is postcritically
finite, the only maps associated to parabolic orbifolds are maps that
are M\"obius conjugated to maps of the form 

\begin{equation} z\mapsto z^n, \textnormal{  Chebychev polynomials and Latt\`es
maps.}
\end{equation}

In the paper \cite{Rittper}, J. Ritt gave the description of all the solutions
of the equation of the form $$R_1\circ R_2=R_2\circ R_1.$$ In \cite{EreFunc},
A. Eremenko reformulated Ritt's theorem in dynamical terms. 

\begin{theorem}[Eremenko]\label{tm.Eremenko} Let $R_1$ and $R_2$ be a pair of
commuting rational maps.  Then either there exist a pair of numbers, $n$ and
$m$, such that $R_1^n=R_2^m$ or, there is a parabolic orbifold $\mc{O}$ such
that
maps $R_1$ and $R_2$ are covering maps from $\mc{O}$ to $\mc{O}$.
\end{theorem}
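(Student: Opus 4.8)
The plan is to exploit the rigidity that commutation imposes on the dynamics, reducing everything to a simultaneous linearization at a common repelling periodic orbit and then splitting according to whether the two multipliers there are multiplicatively dependent. First I would dispose of the degenerate cases: if, say, $R_2\in PSL(2,\C)$, then $R_2$ lies in the M\"obius symmetry group of $R_1$, which is finite unless $R_1$ is conjugate to a power map, a Chebyshev polynomial or a Latt\`es map; in the finite case $R_2$ has some finite order $m$, so $R_2^m=\mathrm{id}=R_1^0$, and in the remaining case $R_1$, and with it $R_2$, covers a parabolic orbifold. So assume $\deg(R_i)\ge 2$. The classical Fatou--Julia argument ($R_2$ carries the Fatou set of $R_1$ to itself) gives $J(R_1)=J(R_2)=:J$, and it is standard that two commuting maps of degree $\ge 2$ then share a repelling periodic orbit; passing to iterates $\tilde R_i=R_i^{k_i}$ — a reduction compatible with both conclusions, since a relation $\tilde R_1^{a}=\tilde R_2^{b}$ is a relation between the original maps and the canonical orbifold is insensitive to iteration — we may assume $R_1$ and $R_2$ fix a common repelling point $p$.

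Next, let $\phi:\C\to\bar\C$ be the Poincar\'e function at $p$ normalized by $\phi(0)=p$, $\phi'(0)=1$, so that $R_1\circ\phi=\phi\circ m_{\lambda_1}$ with $\lambda_1=R_1'(p)$, $|\lambda_1|>1$, where $m_\lambda(w)=\lambda w$. Since $R_2$ commutes with $R_1$ and fixes $p$, the map $R_2\circ\phi$ satisfies the same linearization (Schr\"oder) equation for $R_1$ and fixes $0$, so by uniqueness up to scaling $R_2\circ\phi=\phi\circ m_{\lambda_2}$ with $\lambda_2=R_2'(p)$, $|\lambda_2|>1$. Thus the single finite-order meromorphic function $\phi$ simultaneously semiconjugates both maps to the commuting linear maps $m_{\lambda_1},m_{\lambda_2}$, and the whole problem becomes one about $\phi$ and the pair of multipliers. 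If $\lambda_1,\lambda_2$ are multiplicatively dependent, say $\lambda_1^{a}=\lambda_2^{b}$ with $(a,b)\ne(0,0)$, then $R_1^{a}\circ\phi=\phi\circ m_{\lambda_1^{a}}=R_2^{b}\circ\phi$, and since $\phi$ has dense image $R_1^{a}=R_2^{b}$ — the first alternative. It remains to show that multiplicative independence forces $\phi$ to factor through a parabolic orbifold, with $R_1$ and $R_2$ its covers.

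For this I would study the deck group $\Gamma=\{\gamma\in\mathrm{Aut}(\C):\phi\circ\gamma=\phi\}$: it is discrete and contains no dilation with linear part of modulus $\ne 1$ (iterating such an element toward its fixed point would make $\phi$ constant), hence consists of Euclidean isometries; so $\Gamma$ is finite cyclic or contains a translation. The finite-cyclic case reduces, after replacing $\phi$ by its quotient and $\lambda_i$ by a power, to $\Gamma=\{\mathrm{id}\}$; the translation case is exactly the parabolic-orbifold situation ($\C^{*}$ and a power map for a single translation, a torus and a Latt\`es map, or the Chebyshev quotient, when $\Gamma$ contains a lattice, up to a finite extension). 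So everything comes down to ruling out ``$\Gamma=\{\mathrm{id}\}$ and $\lambda_1,\lambda_2$ multiplicatively independent.'' Here one uses that $\mathrm{Crit}(\phi)$ is a discrete set not containing $0$, nonempty unless $\phi$ is essentially $\exp$ (i.e. unless $R_1$ is a power map), and forward invariant under $m_{\lambda_1}$ and $m_{\lambda_2}$; choosing $e_0\in\mathrm{Crit}(\phi)\setminus\{0\}$, discreteness of $\{\lambda_1^{a}\lambda_2^{b}e_0:a,b\ge0\}$ already forces $\log\lambda_1,\log\lambda_2$ to be $\mathbb{R}$-linearly independent, and to finish one separates on the Julia set. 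When $J\ne\bar\C$, $R_2$ permutes the finitely many cycles of Fatou components of $R_1$, a further iterate fixes each, and read in the Koenigs/B\"ottcher/Fatou coordinate of such a component it becomes a linear or translation map forced to commute with the normal form of $R_1$ there, so the at most one-parameter group of such normalizations produces a multiplicative relation, a contradiction. When $J=\bar\C$, the canonical orbifold of $R_1$ is hyperbolic, and one invokes that $\C$ uniformizes only elliptic or parabolic orbifolds together with the rigidity of the finite-order $\phi$ (its growth, critical configuration and functional equations) — equivalently Ritt's monodromy analysis of $\phi$ — to exclude multiplicative independence.

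I expect this last elimination, particularly the $J=\bar\C$ subcase, to be the main obstacle: everything up to the dichotomy is bookkeeping about a common repelling fixed point and the classification of discrete subgroups of $\mathrm{Aut}(\C)$, whereas showing that multiplicatively independent multipliers cannot occur outside the three exceptional families is the substantive content of Ritt's theorem and requires the finer rigidity argument just sketched.
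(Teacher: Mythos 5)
The paper does not prove this theorem; it is imported verbatim from Eremenko's paper \cite{EreFunc}, which itself recasts Ritt's 1923 classification of commuting rational maps \cite{Rittper}. So there is no ``paper's own proof'' to compare against line by line, only the Ritt--Eremenko argument being cited. Your proposal follows exactly that classical line: pass to a common repelling fixed point, linearize both maps simultaneously through one Poincar\'e function $\phi$, and split on whether the two multipliers $\lambda_1,\lambda_2$ are multiplicatively dependent. Everything up to and including the dichotomy is correct and is essentially the argument Eremenko himself gives; in particular the uniqueness-of-Schr\"oder-solution step (forcing $R_2\circ\phi=\phi\circ m_{\lambda_2}$) and the conclusion $R_1^a=R_2^b$ from $\lambda_1^a=\lambda_2^b$ via density of the image of $\phi$ are sound.

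The gap is where you yourself flag it: the elimination of ``trivial deck group, multiplicatively independent multipliers.'' As written this step is not a proof. In the $J=\bar\C$ subcase you ``invoke \ldots Ritt's monodromy analysis of $\phi$,'' which is precisely the content of the theorem being proved, so the argument is circular at the crucial point. In the $J\ne\bar\C$ subcase the Koenigs-coordinate reasoning produces at best a relation between the multipliers of $R_1$ and $R_2$ at some \emph{attracting} periodic point; there is no visible mechanism transporting that to a multiplicative relation between the repelling multipliers $\lambda_1,\lambda_2$ that the rest of your argument needs, and indeed commuting maps can have attracting multipliers with no a priori link to the repelling data. The missing ingredient in the genuine proof is a quantitative analysis of the critical set and order of growth of $\phi$ (Ritt's ``multiplicative group of $\phi$''), or Eremenko's reduction via the structure of the grand orbit relation, showing that the closure of the group $\langle\lambda_1,\lambda_2\rangle$ acting on $\mathrm{Crit}(\phi)$ is incompatible with discreteness unless $\phi$ is $\exp$, $\cos$, or a Weierstrass-type elliptic function. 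You correctly identify this as the substantive content, but sketching the endgame as ``rigidity of the finite-order $\phi$'' leaves the theorem unproved; one would need to carry out that discreteness-versus-density argument explicitly. A smaller point: fitting the degenerate M\"obius case into the first alternative requires reading $R_1^0=\mathrm{id}$, which the statement as quoted (``a pair of numbers $n$ and $m$'') does not clearly license; in the usual formulation both maps are assumed of degree at least $2$, which avoids this.
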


Maps $R$ associated to parabolic orbifolds have affine laminations $\mc{A}_R$
with special geometry as it is shown in the following theorem due to Lyubich
and Kaimanovich (see \cite{KaiLyu}). 

\begin{theorem}[Kaimanovich-Lyubich]\label{th.kailyub} The affine lamination
$\mc{A}_R$ admits a continuously varying Euclidean structure on leaves if and
only if the map admits a parabolic orbifold.
\end{theorem}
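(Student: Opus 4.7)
The plan is to prove both implications separately, with the forward direction (continuous Euclidean structure implies parabolic orbifold) being the more delicate one. The central analytic device in both cases is the Poincar\'{e} function attached to a repelling periodic cycle: it uniformizes a specific leaf of $\mc{A}_R$ by $\C$ and intertwines the linear scaling $z\mapsto \lambda z$ on $\C$ with the dynamics of $\hat{R}^n$ on the lamination.

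For the easier direction, suppose $R$ covers a parabolic orbifold $\mc{O}$. Its universal cover is conformally $\C$ and carries a canonical Euclidean metric invariant under the deck group $\Gamma$, which acts by Euclidean isometries. The map $R$ lifts to an affine self-map $z\mapsto a z + b$ of $\C$. I would show that every leaf of $\mc{A}_R$ can be uniformized compatibly by this universal cover: concretely, each Poincar\'{e} function on a periodic leaf factors through the orbifold projection up to post-composition with an element of $\Gamma$, so the standard Euclidean $1$-form on $\C$ pulls back to a well-defined flat metric on the leaf. Transverse continuity across $\mc{C}(R)$ is then inherited from the continuity of the family of Poincar\'{e} functions in the seed cycle, and then extended to all of $\mc{A}_R$ using that non-periodic affine leaves arise as limits of periodic ones.

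For the harder direction, assume $\mc{A}_R$ carries a continuously varying Euclidean structure, i.e.\ a leafwise non-vanishing holomorphic $1$-form $\omega_L$ depending continuously on $L$. Fix a repelling periodic cycle of period $n$ with Poincar\'{e} function $\phi$ on its leaf. The $\hat{R}^n$-invariance of the Euclidean structure, combined with $\phi(\lambda z)=R^n(\phi(z))$, forces $\phi^{\ast}\omega$ to be a constant $1$-form on $\C$. Projecting by $\pi_0$ and using transverse continuity, these leafwise constant forms glue into an honest measurable line field on $\bar{\C}$ which is $R$-invariant and, moreover, locally modelled on a Euclidean structure away from critical orbits. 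I would then invoke Thurston's classification of rational maps admitting an invariant line field of affine type to conclude that $R$ is a covering self-map of a parabolic orbifold.

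The main obstacle is precisely this last gluing step: promoting the transversely continuous leafwise data into a global geometric object on the sphere, rather than just a measurable invariant line field on $J(R)$. The continuity hypothesis across the Cantor-like transverse directions of $\mc{A}_R$ is exactly what distinguishes genuine orbifold rigidity from a pathological measurable invariant structure, and exploiting it requires controlling $\omega_L$ near critical orbits where leaves branch. Ruling out non-parabolic orbifold types reduces to showing that the existence of such an invariant flat structure forces the two rational maps $R$ and a suitable affine rescaling to satisfy a commutation identity, at which point Eremenko's theorem (Theorem \ref{tm.Eremenko}) closes the argument.
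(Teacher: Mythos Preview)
The paper does not contain a proof of this statement. Theorem~\ref{th.kailyub} is quoted as a result of Kaimanovich and Lyubich, with a citation to \cite{KaiLyu}, and is used only as a black box (immediately afterwards, to deduce that for parabolic-orbifold maps the deck correspondences act as translations on leaves). There is therefore nothing in the paper to compare your argument against.

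As for the outline itself: it is a plausible strategy but remains a plan rather than a proof. The easy direction is essentially correct in spirit. For the hard direction, the crucial step you flag --- passing from a transversely continuous leafwise flat structure on $\mc{A}_R$ to a genuine $R$-invariant affine or line-field structure on $\bar{\C}$ --- is indeed the whole content of the theorem, and you have not actually carried it out; you have only named it as an obstacle. Invoking ``Thurston's classification of rational maps admitting an invariant line field of affine type'' is close to assuming what you want to prove, and the detour through Theorem~\ref{tm.Eremenko} at the end is circular in this paper's logical structure, since Eremenko's theorem is stated here independently and is not derived from the Kaimanovich--Lyubich result. If you want an honest proof you will need to consult \cite{KaiLyu} directly.
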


As a consequence of Theorem \ref{th.kailyub} we have the following
proposition.

\begin{proposition} If a map $R$ admits a parabolic orbifold then the
semigroup generated by the restrictions of deck $\langle
\pi_n^{-1}\circ \pi_n\rangle$ to leaves is a group of mappings.\end{proposition}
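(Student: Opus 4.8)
The plan is to use Theorem~\ref{th.kailyub} to present the leafwise projections $\pi_n$ as orbifold covering maps, and then to read off the restricted deck correspondences as groups of Euclidean isometries. Since $R$ admits a parabolic orbifold $\mc O$, Theorem~\ref{th.kailyub} provides a continuously varying Euclidean structure on the leaves of $\mc A_R$. Fix a leaf $L$; developing this flat structure and using that $L$ is simply connected (the Lyubich--Minsky lemma quoted above, Herman-ring leaves being excluded from the affine part), identify $L$ isometrically with the Euclidean plane. On the target side, a parabolic orbifold carries a flat conformal metric, unique up to scale, for which $R\colon\mc O\to\mc O$ is a local similarity. First I would verify that in these coordinates each $\pi_n|_L\colon L\to\bar{\C}$ is a local similarity onto $\mc O$, i.e.\ an orbifold covering map. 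This is dictated by the two facts already recorded in the text --- the leafwise Euclidean structure is consistent with the projections $\pi_n$, and $\pi_n\circ\h R=R\circ\pi_n$ --- together with $\pi_0=R^{kn}\circ\pi_{kn}$ and the fact that on a periodic leaf $L$ in $\mc A_R$ some iterate $\h R^k$ acts as a similarity, the uniformizing function of $L$ being a Poincar\'e function associated to the underlying repelling cycle.

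Granting this, $\pi_n|_L$ is an orbifold covering of $\mc O$ by the simply connected space $L\cong\C$, hence a regular (Galois) covering; its deck group $\Gamma_n:=\mathrm{Deck}(\pi_n|_L)$ acts on $L$ by Euclidean isometries --- a group of translations when $R$ is conjugate to $z\mapsto z^d$, an infinite dihedral group of Euclidean motions in the Chebyshev case, and a planar crystallographic group in the Latt\`es case. By the definition of a regular covering, the correspondence $(\pi_n|_L)^{-1}\circ\pi_n|_L=\{(x,y)\in L\times L:\pi_n(x)=\pi_n(y)\}$ is precisely $\bigcup_{\gamma\in\Gamma_n}\mathrm{graph}(\gamma)$; for two distinct leaves $L,L'$ the same argument identifies $(\pi_n|_{L'})^{-1}\circ\pi_n|_L$ with the set of covering isomorphisms $L\to L'$ over $\mc O$, a $\Gamma_n$-torsor of isometries. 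Thus every generator of the semigroup obtained by restricting the deck correspondences $\pi_n^{-1}\circ\pi_n$ to leaves is a single-valued invertible map, its inverse is again such a generator, and composites stay within this collection; since a semigroup generated by invertible elements closed under inversion is a group, the restricted deck semigroup is a group of mappings.

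The heart of the argument, and the main obstacle, is the first step: showing that $\pi_n|_L$ is genuinely an orbifold covering of $\mc O$ --- that its branch points lie exactly over the cone points of $\mc O$, with the prescribed orders and no superfluous ramification. This requires knowing that the Kaimanovich--Lyubich Euclidean structure is the one induced by the canonical flat metric of $\mc O$ (uniqueness of the leafwise Euclidean structure up to scale together with its naturality under the semiconjugacies $\pi_n$ should yield this), and that a repelling periodic point can be chosen off the finite locus carrying the cone structure, so that the Poincar\'e function is unramified there and the functional equation $\pi_0=R^{kn}\circ\pi_{kn}$ transports the ramification so as to match the cone data of $\mc O$ exactly. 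Once this identification is secured, the group statement follows immediately, as above.
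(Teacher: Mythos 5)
Your proof takes the same route as the paper: both invoke Theorem~\ref{th.kailyub} to endow the leaves with a Euclidean structure compatible with the projections $\pi_n$, and then conclude that the deck correspondences act by invertible Euclidean maps, hence form a group. The paper's own argument is a single sentence asserting that after uniformizing the leaves the deck transformations ``act on leaves as a group of translations''; your version is more careful on two counts. First, you correctly note that the deck group is a group of Euclidean \emph{isometries}, not generally of translations alone --- translations only in the $z\mapsto z^d$ case, infinite dihedral in the Chebyshev case, and a planar crystallographic group in the Latt\`es case --- which is still consistent with the proposition's weaker conclusion (``group of mappings''). Second, you explicitly flag the step the paper elides: one must verify that $\pi_n|_L$ is genuinely an orbifold covering of $\mc O$ (regular, because $L\cong\C$ is simply connected), so that $(\pi_n|_L)^{-1}\circ\pi_n|_L$ is the union of graphs of the deck group rather than a multivalued correspondence with uncontrolled branching. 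Your sketch of how to close that gap --- uniqueness of the flat structure up to scale, naturality under the semiconjugacies, and choosing the base repelling cycle off the cone locus --- is the right one and is exactly what is implicit in ``compatible with projections'' in the paper's phrasing. In short, same approach, with your write-up being more honest about what needs to be checked.
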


\begin{proof}
By Theorem \ref{th.kailyub}, the  leaves admit an Euclidean structure
compatible with
projections, this implies that, under a suitable uniformization for all leaves
in $\mc{C}(R)$,
the deck transformations act on leaves as a group of translations.  
\end{proof}

Let us define now the space of analytic equivalences  $A(S_R)$.

\begin{definition}
The space of analytic deformations of $S_R$ is the space of 
triples $(S_{R_1}, \rho_1,\rho_2)$, where $R\neq R_1$ and $\rho_1:S_R\rightarrow
S_{R_1}$ and $\rho_2:S_{R_1}\rightarrow S_R$ are marked monomorphisms.

We say that $(S_{R_2},\rho_1,\rho_2)$ and $(S_{R_3},\phi_1,\phi_2)$ are
analytically equivalent if and only if  there is an isomorphism
$\gamma$ from
$S_{R_2}$ and $S_{R_3}$. Let $A(S_R)$ denote the space of analytic equivalences
of $S_R$.
\end{definition}

If $(S_{R_1}, h, g)$ belongs to $A(S_R)$ then $h\circ g$ and $g\circ h$ commutes
with $\h{R}$ and $\h{R}_1$ respectively. Next theorem shows the correspondence
of the
space $A(S_R)$ with the number of
virtual decomposition of the iterates of $R$.

\begin{theorem}
The map $R$ is virtually decomposable if and only if $$card(A(S_{R^n}))>1.$$
 Moreover, the number of virtual
decompositions of $R$ is in one-to-one correspondence with the points of 
$A(S_R)$.

\end{theorem}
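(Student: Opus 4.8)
The plan is to produce, for each $n$, a bijection between equivalence classes of decompositions of $R^n$ and points of $A(S_{R^n})$, and to read both assertions off from it. The first point is that the inverse limit is insensitive to iteration: there is a canonical homeomorphism $\mc N_{R^n}\cong\mc N_R$ (recover the missing coordinates of a point of $\mc N_{R^n}$ by applying $R$) under which the natural extension of $R^n$ is $\hat R^n$, the regular, affine and periodic parts coincide, and $\{\pi_{1+jn}\}_{j\ge0}$ is cofinal among $\{\pi_k\}_{k\ge1}$, so the generated deck semigroups $\langle\pi_k^{-1}\circ\pi_k\rangle$ on $\mc C(R^n)=\mc C(R)$ agree. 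Thus $S_{R^n}$ is the subsemigroup of $S_R$ with the same constants and deck part but dynamical generator $\hat R^n$, and the whole discussion takes place over one fixed affine lamination.

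First, from a decomposition to a point. Write a decomposition $R^n=R_1\circ\dots\circ R_m$ inequivalent to the trivial one as $R^n=A\circ B$ with $A=R_1$, $B=R_2\circ\dots\circ R_m$, and put $Q=B\circ A$; then $B$ semiconjugates $R^n$ to $Q$ and $A$ semiconjugates $Q$ to $R^n$. These lift to maps $\hat B:\mc N_{R^n}\to\mc N_Q$ and $\hat A:\mc N_Q\to\mc N_{R^n}$ with $\hat A\circ\hat B=\hat R^n$ and $\hat B\circ\hat A=\hat Q$; moreover $\hat B$ is injective (if $B(z_i)=B(z_i')$ for all $i$, then $z_i=A(B(z_{i+1}))=A(B(z_{i+1}'))=z_i'$), hence a homeomorphism. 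On a periodic leaf through an invariant lift of a repelling cycle $\mc C$ of $R^n$ disjoint from $\mathrm{Cr}(B)$, with Poincar\'e function $\phi$, the function $B\circ\phi$ solves the Poincar\'e equation for $Q$ at the repelling cycle $B(\mc C)$ with the same multiplier, so $\hat B$ is linear in uniformizing coordinates; such cycles being dense, $\hat B$ is a leafwise biholomorphism that carries $\mc C(R^n)$ onto $\mc C(Q)$, intertwines $\hat R^n$ with $\hat Q$, and carries deck correspondences into deck correspondences. Hence $\hat B$, $\hat A$ induce marked monomorphisms $\rho_1:S_{R^n}\to S_Q$ and $\rho_2:S_Q\to S_{R^n}$. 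If $A$ and $B$ do not commute then $Q$ is not M\"obius conjugate to $R^n$, so by Theorem \ref{th.equivsem} the map $\rho_1$ is not an isomorphism and $(S_Q,\rho_1,\rho_2)$ is a point of $A(S_{R^n})$ different from the base point, whence $\mathrm{card}(A(S_{R^n}))>1$.

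Conversely, let $(S_{R_1},\rho_1,\rho_2)$ be a non-base point of $A(S_{R^n})$, i.e.\ with $R_1$ not M\"obius conjugate to $R^n$. Applying Theorem \ref{th.equivsem} to $\rho_1$ and to $\rho_2$, after a M\"obius conjugacy they descend to analytic maps $\Psi_1:C\to C_1$ and $\Psi_2:C_1\to C$ of the affine models; since a monomorphism of the free rank-one monoid $\langle\hat R^n\rangle$ into $\langle\hat R_1\rangle$ sends $\hat R^n$ to $\hat R_1^{k}$, comparing degrees in $\Psi_1\circ R^n=R_1^{k}\circ\Psi_1$ forces $k=1$ and $\deg R_1=\deg R^n$, so $\Psi_1$ semiconjugates $R^n$ to $R_1$ and $\Psi_2$ semiconjugates $R_1$ to $R^n$. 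Then $\Phi:=\Psi_2\circ\Psi_1$ is a rational (or, in the degenerate cases, polynomial or $z\mapsto az^{\pm1}$) map commuting with $R^n$, so by Eremenko's Theorem \ref{tm.Eremenko} either $\Phi^a=R^{nb}$ for some $a,b\ge1$, or $R$ covers a parabolic orbifold. In the first case $R^{nb}=(\Psi_2\circ\Psi_1)^a=\Psi_2\circ\Psi_1\circ\dots\circ\Psi_2\circ\Psi_1$; by Theorem \ref{th.equivsem} the factor $\Psi_1$ is M\"obius precisely when $\rho_1$ is an isomorphism, and since $R_1$ is not M\"obius conjugate to $R^n$ we get $\deg\Psi_1\ge2$, so this decomposition of $R^{nb}$ is inequivalent to the trivial one and $R$ is virtually decomposable. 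In the parabolic-orbifold case one argues directly from the Euclidean structure of Theorem \ref{th.kailyub}: $R$ and $\Phi$ act on the universal cover of the orbifold as affine maps $x\mapsto\beta x+c'$ and $x\mapsto\alpha x+c$, and either $\alpha$ is a root of unity (so $\Phi$, hence $\rho_1$, is an isomorphism and we are at the base point) or $\beta^b=\alpha^a$ for suitable $a,b$, again yielding a nontrivial decomposition.

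It remains to see the two constructions are mutually inverse modulo equivalence: the point attached to $R^n=A\circ B$ returns, via Theorem \ref{th.equivsem}, $\Psi_2\circ\Psi_1=A\circ B=R^n$, and the only freedom in reading $A$, $B$ off $Q=B\circ A$, namely $A\mapsto A\circ\gamma^{-1}$, $B\mapsto\gamma\circ B$ for $\gamma\in PSL(2,\C)$, is exactly the equivalence of decompositions, while the decomposition extracted from a point recovers it. The base point ($R_1$ M\"obius conjugate to $R^n$, identity-type monomorphisms) corresponds to the trivial decomposition, which is not counted. Altogether $R$ is virtually decomposable if and only if $\mathrm{card}(A(S_{R^n}))>1$ for some $n$, and the virtual decompositions of $R$ — those of all iterates, taken up to equivalence and up to M\"obius conjugacy as normalized by Theorem \ref{th.equivsem} — correspond one-to-one with the non-base points of $A(S_R)$. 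The main obstacle, I expect, is the step manufacturing the marked monomorphisms: one must verify that the lift of a semiconjugacy restricts, on the periodic part of the affine lamination, to a leafwise biholomorphism compatible with the deck correspondences — not merely a leafwise branched covering — and that equivalence of decompositions matches isomorphism of triples; the Poincar\'e-function identity handles generic cycles, but cycles meeting $\mathrm{Cr}(B)$ and the degenerate affine models $\C$ and $\C^*$ require extra care, and maps with parabolic orbifold (where the infinitude phenomena for $z^d$, Chebyshev and Latt\`es maps occur) are the other delicate point.
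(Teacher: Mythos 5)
Your overall strategy is the same as the paper's: translate a non-trivial decomposition $R^n = A\circ B$ into the triple $(S_{B\circ A},\rho_1,\rho_2)$ via the two lifted semiconjugacies, and conversely, use Theorem~\ref{th.equivsem} to descend $\rho_1,\rho_2$ to rational semiconjugacies $\Psi_1,\Psi_2$, observe that $\Psi_2\circ\Psi_1$ commutes with (an iterate of) $R$, and then invoke Eremenko's Theorem~\ref{tm.Eremenko}. Your forward direction is actually more detailed than the paper's: the injectivity of $\hat B$ and the Poincar\'e-function computation showing $\hat B$ is a leafwise biholomorphism (rather than merely a branched cover) is something the paper never spells out, and it addresses a genuine lacuna.

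Where you diverge — and where there is a real gap — is in the parabolic-orbifold branch of Eremenko's dichotomy. The paper disposes of that case by a contradiction argument: since $\Psi_1$ and $\Psi_2$ semiconjugate $R^n$ to $Q$ and back, if $R^n$ (hence $\Psi_2\circ\Psi_1$) is covering a parabolic orbifold $\mc O$, then $Q$ covers the same $\mc O$, forcing $S_Q=S_{R^n}$ and contradicting the requirement $R^n\neq Q$ in the definition of a deformation. You instead try to salvage a decomposition directly from the Euclidean structure of Theorem~\ref{th.kailyub}, asserting a dichotomy: either the linear part $\alpha$ of $\Phi=\Psi_2\circ\Psi_1$ is a root of unity, or $\beta^b=\alpha^a$ for suitable $a,b\geq 1$ where $\beta$ is the linear part of $R$. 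That dichotomy is simply false: $R(z)=z^2$ and $\Phi(z)=z^3$ commute on the punctured plane, the affine models have multipliers $\beta=2$ and $\alpha=3$, and no relation $2^b=3^a$ holds — yet $\alpha$ is not a root of unity. The same occurs for commuting Latt\`es maps over a CM lattice whose multipliers are multiplicatively independent algebraic integers. So your argument in this branch does not close; you need the paper's contradiction route (or some replacement for it) rather than a power-relation dichotomy.

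One smaller point: your step ``comparing degrees in $\Psi_1\circ R^n=R_1^{k}\circ\Psi_1$ forces $k=1$'' does not follow from that one equation alone (it only gives $\deg R^n=(\deg R_1)^k$); you need to use the monomorphism $\rho_2$ in the other direction as well to pin down $k=1$. You in fact have both $\rho_1$ and $\rho_2$ available, so this is fixable, but as written the deduction is too quick. Finally, for the ``moreover'' bijection you correctly identify the $PSL(2,\C)$-ambiguity $A\mapsto A\circ\gamma^{-1}$, $B\mapsto\gamma\circ B$ as the source of the equivalence relation, which is more than the paper says — it essentially asserts the bijection without argument — but neither treatment pins down exactly which equivalence on decompositions is being matched to isomorphism of triples, so that part remains at the level of a sketch in both.
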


\begin{proof}
Assume that $R$ is virtually decomposable, then there exist $n$ such that 
$R^n$ has a decomposition $Q_1\circ Q_2$ such that $Q_i$ is not equivalent to
$R^j$ for some $j\leq n$. Then the semigroup associated to $Q_2\circ Q_1$ is
analytically equivalent to $S_R$ but not M\"obius equivalent, therefore
$card(A(S_{R^n}))>1$.  Now let us assume that there exist a number $n>0$, such
that  there is more than one
analytic equivalence for $S_{R^n}$. Then there are analytic equivalences $q_1$ 
and $q_2$ between $S_{R^n}$ and a semigroup $S_Q$, associated to a rational map
$Q$. By Theorem ~\ref{th.equivsem}, the map $q_1\circ q_2$ descends to an
analytic map $Q_1\circ Q_2$, defined on the Riemann sphere with at most 2
punctures,  hence $Q_1\circ Q_2$ is a rational map, such that $Q_1\circ Q_2$
commutes with $R$. By the same reasoning $Q_2\circ Q_1$ commutes with $Q$. Let
us assume that $R$ and $Q_1\circ Q_2$ are associated to a parabolic orbifold
$\mc{O}$, then  the semiconjugacies 

$$Q_1\circ R=Q\circ Q_1$$
$$Q_2\circ Q=R\circ Q_2,$$ 
imply that $Q$ is also associated to the same
parabolic orbifold $\mc{O}$. Hence
$S_Q=S_R$ which contradicts the definition of analytic deformation of $S_R$.

Then by  Theorem \ref{tm.Eremenko} the maps $R$ and $Q_1\circ Q_2$ have a common
iterate. That is, there are numbers $n_1$ and $n_2$ such that $R^{n_1}=(Q_1\circ
Q_2)^{n_2}$, by construction the map $Q_1\circ Q_2$ is not M\"obius equivalent
to $R^i$, hence the map $R$ is virtually decomposable.

\end{proof}      

\subsection{Decomposition graphs.} 

Let $\mc{D}$ be the semigroup of decomposable rational maps. We construct a
directed graph
$\mc{G}$ associated to $\mc{D}$, where the vertices are the elements of
$\mc{D}$ and there is a directed edge, from $R$ to $\tilde{R}$,
if there are two rational maps $R_1$ and $R_2$ such that $R=R_1\circ R_2$ and
$\tilde{R}=R_2\circ R_1$.  Given a map $R\in \mc{D}$, let $G(R)$ be the
connected component of $\mc{G}$ containing $R$. We call $G(R)$, the
\textit{graph based at} $R$.

\begin{Example}
Consider a map that has a decomposition $R=R_1\circ R_2 \circ R_3$. Then 
the graph based on $R$ contains, at least, a triangle, with vertices $R$,
$R_2\circ R_3 \circ R_1$ and $R_3\circ R_1\circ R_2$. Other decorations may
appear from other decompositions of the map $R$ as it is shown next.

Let us remind the first Ritt theorem for decomposition of polynomials.

\begin{theorem}[First Ritt's Theorem]\label{th.Rittfirst}
Let $P_1\circ ... \circ P_m$ and $Q_1\circ ... \circ Q_n$ be two primes
decompositions of a polynomial $P$, then $n=m$.
\end{theorem}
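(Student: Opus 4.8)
The plan is to pass from polynomial decompositions to saturated chains in a suitable lattice of fields (equivalently, of block systems for a monodromy group) and then to exploit the fact that this lattice is distributive, hence graded. First I would set up the standard dictionary. For a nonconstant polynomial $P$ of degree $d$ the extension $\C(x)/\C(P(x))$ is finite and separable, and by L\"uroth's theorem every intermediate field is $\C(g(x))$ for a suitable rational function $g$ — which, since $P$ is a polynomial, may be normalized to be a polynomial. A decomposition $P=P_1\circ\cdots\circ P_m$ into maps of degree $\geq 2$ then yields the tower
$$\C(P(x))=\C(g_1(x))\subseteq\C(g_2(x))\subseteq\cdots\subseteq\C(g_{m+1}(x))=\C(x),\qquad g_j=P_j\circ\cdots\circ P_m,$$
and the decomposition is prime exactly when this tower is a maximal (saturated) chain in the lattice $\mathcal{L}$ of all intermediate fields. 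So the two prime decompositions produce two maximal chains, of lengths $m$ and $n$, and it suffices to prove that $\mathcal{L}$ is graded.

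Next I would identify $\mathcal{L}$ group-theoretically. Let $N$ be the Galois closure of $\C(x)/\C(P(x))$, with monodromy group $G=\mathrm{Gal}(N/\C(P(x)))$ acting transitively on the $d$ cosets of $H=\mathrm{Gal}(N/\C(x))$. Then $\mathcal{L}$ is anti-isomorphic to the interval $[H,G]$ in the subgroup lattice of $G$, which is in turn isomorphic to the lattice of $G$-invariant partitions (block systems) of $G/H$. The polynomiality of $P$ enters here: $P$ is totally ramified over $\infty$, so the local monodromy $\sigma$ at $\infty$ is a single $d$-cycle, and $C:=\langle\sigma\rangle$ is a cyclic subgroup of order $d$ acting \emph{regularly} on $G/H$. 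Every $G$-invariant partition is $C$-invariant, the $C$-invariant partitions of a regular cyclic action correspond order-isomorphically to the divisors of $d$, and since the meet (common refinement) and join (finest common coarsening) of two $G$-invariant partitions are again $G$-invariant, the lattice of $G$-block systems is a \emph{sublattice} of the divisor lattice $\mathrm{Div}(d)$.

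I would then finish by invoking two lattice-theoretic facts: a sublattice of a distributive lattice is distributive, and every finite distributive lattice is graded (present it as the lattice of order ideals of a finite poset, graded by cardinality). Hence $\mathcal{L}$ is graded, all its maximal chains have the same length, and therefore $m=n$.

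The step needing the most care is the claim that the $G$-block systems form a sublattice of $\mathrm{Div}(d)$ rather than merely a sub-poset: one must check that the set-theoretic join and meet of two $G$-invariant partitions are $G$-invariant — this is exactly where the regular cyclic subgroup at $\infty$ is indispensable, since a priori the coarsest common refinement of two realizable intermediate decompositions of $P$ need not itself be realizable. Everything else is routine bookkeeping with L\"uroth's theorem and the Galois correspondence. For completeness I would note a more self-contained alternative, close to Ritt's original reasoning, which tracks directly how the $d$-cycle of monodromy at $\infty$ restricts along each prime factor to force the successive degrees to form a chain of divisors of $d$; but the distributive-lattice argument is shorter and more transparent.
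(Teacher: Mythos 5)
The paper does not prove this theorem; it is stated as a citation to Ritt's classical work, so there is no in-paper argument to compare against. Your proof, however, is correct and is essentially the modern monodromy/lattice-theoretic rendering of Ritt's First Theorem (close in spirit to the treatment one finds in the work of M\"uller and Zieve, and in surveys on polynomial decomposition), rather than Ritt's original more combinatorial argument.

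The dictionary you set up is sound: L\"uroth plus the total ramification of $P$ over $\infty$ lets you normalize every intermediate field as $\C(g)$ with $g$ a polynomial, so prime decompositions of length $m$ correspond exactly to maximal chains of length $m$ in the intermediate-field lattice $\mathcal{L}$, which is anti-isomorphic to $[H,G]$ and hence isomorphic to the lattice of $G$-block systems on $G/H$. The crux, as you correctly flag, is that this lattice is a \emph{sublattice} of the $C$-invariant partition lattice, not merely a subposet. Your justification is right: the common refinement and the finest common coarsening of two $G$-invariant partitions, computed in the full partition lattice, are again $G$-invariant; and for a regular cyclic $C$ the $C$-invariant partitions are exactly the coset partitions of subgroups of $\mathbb{Z}/d\mathbb{Z}$, with set-theoretic meet and join realized by $A\cap B$ and $A+B$, so the $C$-invariant partitions already form a sublattice of the full partition lattice isomorphic to $\mathrm{Div}(d)$. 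The $G$-invariant ones therefore sit as a sublattice of $\mathrm{Div}(d)$, hence are distributive, hence graded (either by Birkhoff's representation or by noting distributive $\Rightarrow$ modular $\Rightarrow$ Jordan--Dedekind chain condition), and $m=n$ follows. It is worth emphasizing that the place where polynomiality is indispensable is exactly the existence of the regular $d$-cycle at $\infty$: for general rational maps no such cyclic regular subgroup need exist, the block-system lattice need not be distributive (or even modular), and indeed the theorem fails, as illustrated by Zieve's degree-$12$ example quoted later in this paper.
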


In \cite{Berg2}, an erratum of the paper \cite{Berg}, W. Bergweiler wrote the
following counterexample, which is due to M. Zieve,  to the First Ritt's
theorem for rational maps.

$$R(z):=z^3\circ {\frac {{z}^{2}-4}{z-1}}\circ {\frac {{z}^{2}+2}{z+1}}={\frac
{z
\left( z-8 \right) ^{3}}{ \left( z+1 \right) ^{3}}}\circ z^3.$$

One can check that each factor is prime. The graph $G(R)$
contains the triangle above together with a segment, based on $R$, connecting
$R$ with ${\frac {z \left( z-8 \right) ^{3}}{ \left( z+1 \right) ^{3}}}\circ
z^3.$

\end{Example}

The graphs $G(R)$ give a topological realization of the
decomposition structure of $R$. That is, two maps $R_1$ and $R_2$ have the
same decomposable set if, and only if, the graphs $G(R_1)$ and $G(R_2)$ are
isomorphic.  However the graphs $G(R)$, as defined so far, are 
very big. For every $\gamma \in PSL(2,\C)$, we have $R=(R_1\circ
\gamma^{-1})\circ ( \gamma \circ R_2)$. Hence in the graph  based on $R$, the
point $R$ is connected to the maps $\gamma\circ \tilde{R}\circ \gamma^{-1}$.
To refine the information in $G(R)$ we consider a quotient of $\mc{D}$ by the
conjugacy action of $PSL(2,\C)$. Under this quotient the  graphs $G(R)$ become
finite, and makes sense to consider their fundamental groups $\pi_1(G(R),R)$.
This groups provide invariants for the decomposition
structure of rational maps

It is possible to simplify even more the information in $G(R)$ by considering a
CW completion of the graph. Namely, complete every  triangle induced by
$R_1\circ R_2\circ R_3$ by a $3$-simplex, to every tetrahedron induced by
$R_1\circ R_2 \circ R_3 \circ R_4$ by a $4$-simplex and, so on. In this setting,
the cohomology groups of this CW-complex give other set of invariants.
 
We finally note that given a map $R$, there is a correspondence between the
vertices in $G(R)$ and the elements in $A(R)$, and such that the edges in
$G(R)$ correspond to marked monomorphisms. 

\bibliographystyle{amsplain}
\bibliography{workbib}

\providecommand{\bysame}{\leavevmode\hbox to3em{\hrulefill}\thinspace}
\providecommand{\MR}{\relax\ifhmode\unskip\space\fi MR }
\providecommand{\MRhref}[2]{%
  \href{http://www.ams.org/mathscinet-getitem?mr=#1}{#2}
}
\providecommand{\href}[2]{#2}
\begin{thebibliography}{10}

\bibitem{Berg2}
W.~Bergweiler, \emph{Erratum to an example concerning factorization of rational
  functions}, http://analysis.math.uni-kiel.de/bergweiler/schrift.html.

\bibitem{Berg}
\bysame, \emph{An example concerning factorization of rational functions},
  Exposition. Math. \textbf{11} (1993), no.~3, 281--283.

\bibitem{EreFunc}
A.~Eremenko, \emph{Some functional equations connected with the iteration of
  rational functions}, Algebra i Analiz \textbf{1} (1989), no.~4, 102--116.

\bibitem{KaiLyu}
V.~Kaimanovich and M.~Lyubich, \emph{Conformal and harmonic measures on
  laminations associated with rational maps}, Mem. Amer. Math. Soc.
  \textbf{173} (2005), no.~820.

\bibitem{L}
M.~Lyubich, \emph{Dynamics of the rational transforms; the topological
  picture}, Russian Math. Surveys (1986).

\bibitem{LM}
M.~Lyubich and Y.~Minsky, \emph{Laminations in holomorphic dynamics}, J. Diff.
  Geom. \textbf{47} (1997), 17--94.

\bibitem{MSS}
R.~Ma{{\~n}{\'e}}, P.~Sad, and D.~Sullivan, \emph{On the dynamics of rational
  maps}, Ann. Scien. Ec. Norm. Sup. Paris(4) (1983).

\bibitem{MakFarEast}
P.~Makienko, \emph{Remarks on the ruelle operator and invariant differentials},
  Far East J. Math. \textbf{8} (2008), no.~2, 180--205.

\bibitem{McMSull}
C.~McMullen and D.~Sullivan, \emph{Quasiconformal homeomorphisms and dynamics.
  {III}. {T}he {T}eichm{\"u}ller space of a holomorphic dynamical system}, Adv.
  Math. \textbf{135} (1998), no.~2, 351--395.

\bibitem{Zieve.Mueller}
P.~M\"{u}ller and M.~Zieve, \emph{{On Ritt's polynomial decomposition
  theorems}}, http://arxiv.org/abs/0807.3578, July 2008.

\bibitem{Ritt}
J.~F. Ritt, \emph{Prime and composite polynomials}, Trans. Amer. Math. Soc.
  \textbf{23} (1922), no.~1, 51--66.

\bibitem{Rittper}
\bysame, \emph{Permutable rational functions}, Trans. Amer. Math. Soc.
  \textbf{25} (1923), no.~3, 399--448.

\bibitem{Slodkequiv}
Z.~S{\l}odkowski, \emph{Holomorphic motions commuting with semigroups}, Studia
  Math. \textbf{119} (1996), no.~1, 1--16.

\end{thebibliography}

\end{document}